\documentclass[11pt]{amsart}

\topmargin 0mm
\evensidemargin  15mm
\oddsidemargin  15mm
\textwidth  140mm
\textheight 230mm

\theoremstyle{plain}
\newtheorem{theorem}                 {Theorem}      [section]

\newtheorem{conjecture}   [theorem]  {Conjecture}

\newtheorem*{theorem*}               {Theorem\;{\bf \ref{th:
bih surf s4_parallel H}}}
\newtheorem*{probl}                  {Open problem}

\theoremstyle{definition}

\numberwithin{equation}{section}

\def \1{\mbox{${\mathbf 1}$}}

\def \s{\mbox{${\mathbb S}$}}

\def \rrm{\mbox{${\scriptstyle \frac{1}{\sqrt 2}}$}}

\DeclareMathOperator{\trace}{trace}
\DeclareMathOperator{\grad}{grad}

\DeclareMathOperator{\id}{Id}

\begin{document}

\title{Biharmonic surfaces of $\s^4$}
\author{A. Balmu\c s}
\author{C. Oniciuc}
\address{Faculty of Mathematics, ``Al.I.~Cuza'' University of Iasi\\
\newline
Bd. Carol I Nr. 11 \\
700506 Iasi, ROMANIA}
\email{adina.balmus@uaic.ro,
oniciucc@uaic.ro}

\dedicatory{}

\subjclass[2000]{58E20}

\thanks{}

\begin{abstract}
In this note we prove that a constant mean curvature surface is
proper-biharmonic in the unit Euclidean sphere $\s^4$ if and only if
it is minimal in a hypersphere $\s^3(\rrm)$.
\end{abstract}

\keywords{Biharmonic surfaces}

\maketitle

\section{Introduction}

{\it Biharmonic maps} $\varphi:(M,g)\to(N,h)$ between Riemannian
manifolds are critical points of the {\em bienergy} functional
$$
E_2(\varphi)=\frac{1}{2}\int_{M}\,|\tau(\varphi)|^2\,v_g,
$$
where $\tau(\varphi)=\trace\nabla d\varphi$ is the {tension field}
of $\varphi$ that vanishes for harmonic maps (see~\cite{JEJHS}). The
Euler-Lagrange equation corresponding to $E_2$ is given by the
vanishing of the {\it bitension field}
$$
\tau_2(\varphi)=-J^{\varphi}(\tau(\varphi))=-\Delta\tau(\varphi)
-\trace R^N(d\varphi,\tau(\varphi))d\varphi,
$$
where $J^{\varphi}$ is formally the Jacobi operator of $\varphi$
(see \cite{GYJ1}). The operator $J^{\varphi}$ is linear, thus any
harmonic map is biharmonic. We call {\it proper-biharmonic} the
non-harmonic biharmonic maps.

The study of {\it proper-biharmonic submanifolds}, i.e. submanifolds
such that the inclusion map is non-harmonic (non-minimal)
biharmonic, constitutes an important research direction in the
theory of biharmonic maps. The first ambient spaces taken under
consideration for their proper-biharmonic submanifolds were the
spaces of constant sectional curvature. Non-existence results were
obtained for proper-biharmonic submanifolds in Euclidean and
hyperbolic spaces (see \cite{BMO1, RCSMCO2, BYC, ID, THTV}).

The case of the Euclidean sphere is different. Indeed, the
hypersphere $\s^{n-1}(\rrm)$ and the generalized Clifford torus
$\s^{n_1}(\rrm)\times\s^{n_2}(\rrm)$, $n_1+n_2=n-1$, $n_1\neq n_2$,
are the main examples of proper-biharmonic submanifolds in $\s^n$
(see \cite{RCSMCO1, GYJ1}). Moreover, the following
\begin{conjecture}[\cite{BMO1}]\label{conj: hypersurf_sphere}
The only proper-biharmonic hypersurfaces in $\s^{n}$ are the open
parts of hyperspheres $\s^{n-1}(\rrm)$ or of generalized Clifford
tori $\s^{n_1}(\rrm)\times \s^{n_2}(\rrm)$, $n_1+n_2=n-1$, $n_1\neq
n_2$.
\end{conjecture}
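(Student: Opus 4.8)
The statement is a still-open conjecture, so the strategy I would follow goes through completely only under additional hypotheses; here is its shape. First I would write down the biharmonic equation for an oriented hypersurface $\varphi\colon M^{n-1}\to\s^n$. Let $\eta$ be a unit normal, $A$ the shape operator and $f=\tfrac{1}{n-1}\trace A$ the mean curvature function, so that $\tau(\varphi)=(n-1)f\eta$. Substituting this into $\tau_2(\varphi)=-\Delta\tau(\varphi)-\trace R^{\s^n}(d\varphi,\tau(\varphi))d\varphi$, using the Gauss and Weingarten formulae together with $R^{\s^n}(X,Y)Z=\langle Y,Z\rangle X-\langle X,Z\rangle Y$, and separating the normal and tangential components, one obtains that $\varphi$ is biharmonic if and only if
\[
\Delta f=\bigl((n-1)-|A|^2\bigr)f,\qquad 2\,A(\grad f)+(n-1)\,f\,\grad f=0 ,
\]
$\Delta$ being the Laplace--Beltrami operator of $M$. (This is of the same nature as the codimension-two computation carried out later for surfaces in $\s^4$, so I would take it as given.)

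The decisive step is then to prove that a proper-biharmonic hypersurface has constant mean curvature. On the open set $\Omega=\{\grad f\neq 0\}$ the second equation says that $-\tfrac{n-1}{2}f$ is a principal curvature of $M$ with $\grad f$ a principal direction; feeding this back into the Codazzi equation, into the scalar equation, and into the Cauchy--Schwarz bound $|A|^2\ge(n-1)f^2$ (with equality only at umbilic points), one tries to force $\Omega=\emptyset$. This is precisely the argument that works for $n=3$, where it excludes non-CMC proper-biharmonic surfaces in $\s^3$; for $n\ge 4$ it is open, and I expect this to be the main obstacle.

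Granting that $f$ is constant, the second equation holds automatically and the first splits into two cases: either $f=0$, so $M$ is minimal, hence harmonic and not proper; or $|A|^2=n-1$, so that $\trace A$ and $|A|^2$ are both constant, and it remains to classify such hypersurfaces. When $M$ has at most two distinct principal curvatures, these are then constant, so $M$ is umbilic or isoparametric with two curvatures; a short computation using $\trace A=(n-1)f$ and $|A|^2=n-1$ (and, in the non-umbilic case, the classification of two-curvature isoparametric hypersurfaces of $\s^n$) singles out exactly the open parts of $\s^{n-1}(\rrm)$ and of $\s^{n_1}(\rrm)\times\s^{n_2}(\rrm)$ with $n_1+n_2=n-1$, $n_1\neq n_2$. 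Excluding three or more distinct principal curvatures for a CMC hypersurface with $|A|^2=n-1$ is, once more, not known in general, so the full conjecture requires both this and the CMC reduction of the second step. The present note instead handles the codimension-two case of surfaces in $\s^4$ under a constant mean curvature hypothesis, which bypasses the analogous difficulties.
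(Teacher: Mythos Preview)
Your assessment is correct: the paper does not prove this statement at all. It is stated there as Conjecture~1.1, attributed to \cite{BMO1}, and the surrounding text explicitly says only that it ``proved to be true for certain classes of hypersurfaces with additional geometric properties''. There is therefore no proof in the paper to compare against.

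Your outline of the known strategy is accurate. The system
\[
\Delta f=\bigl((n-1)-|A|^2\bigr)f,\qquad 2A(\grad f)+(n-1)f\,\grad f=0
\]
is indeed the hypersurface specialization of the characterization in Theorem~\ref{th: bih subm E^n(c)}, and you correctly identify the two genuine gaps: showing that $f$ must be constant, and, once $|A|^2=n-1$ is known, ruling out three or more distinct principal curvatures. Both remain open in general. Your remark that the paper sidesteps these issues by working in codimension two under a CMC hypothesis is exactly right: that is the content of Theorem~\ref{th: bih surf s4_parallel H}, whose proof uses Theorems~\ref{th: classif_pseudo_umb_codim2} and~\ref{th: classif_surf_parallelH} rather than anything resembling a proof of the conjecture.
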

was proposed. This proved to be true for certain classes of
hypersurfaces with additional geometric properties (see \cite{BMO1,
BMO2}).

In codimension greater than $1$, the family of proper-biharmonic
submanifolds is rather large. For example, any minimal submanifold
in $\s^{n-1}(\rrm)$ is proper-biharmonic in $\s^n$. In particular,
any minimal surface in $\s^3(\rrm)$, see \cite{HBL}, provides a
proper-biharmonic surface in $\s^4$.

All proper-biharmonic submanifolds of $\s^2$ and $\s^3$ were
determined (see \cite{CMP, RCSMCO1}). The next step towards the
classification of proper-biharmonic submanifolds in spheres is
represented by the case of $\s^4$, and the first achievement was the
proof of Conjecture~\ref{conj: hypersurf_sphere} for compact
hypersurfaces in $\s^4$ (see \cite{BMO2}). Since all
proper-biharmonic curves in $\s^n$, and therefore in $\s^4$, were
determined (see \cite{RCSMCO2}), the aim of this paper is to give a
partial answer to the
\begin{probl}[\cite{BMO2}] Are there other proper-biharmonic surfaces in $\s^4$,
apart from the minimal surfaces of $\s^3(\rrm)$?
\end{probl}
We show that the answer is negative in the case of proper-biharmonic
surfaces with constant mean curvature in $\s^4$ (Theorem \ref{th:
bih surf s4_parallel H}).

For other results on proper-biharmonic submanifolds in spaces of
non-constant sectional curvature see, for example, \cite{FLMO,
TIJIHU, MO, YO}.

\section{Preliminaries}

Let $\varphi:M\to\s^n$  be the canonical inclusion of a
submanifold $M$ in the $n$-dimensional unit Euclidean sphere. The
expressions assumed by the tension and bitension fields are
\begin{equation}\label{eq: tens+bitens E^n(c)}
\tau(\varphi)=mH,\qquad\qquad \tau_2(\varphi)=-m(\Delta H-mH),
\end{equation}
where $H$ denotes the mean curvature vector field of $M$ in
$\s^n$, while $\Delta$ is the rough Laplacian on
$\varphi^{-1}T\s^n$.

The following characterization result proved to be the main
ingredient in the study of proper-biharmonic submanifolds in
spheres.

\begin{theorem}[\cite{BYC2, CO1}]\label{th: bih subm E^n(c)}
The canonical inclusion $\varphi:M^m\to\s^n$ of a submanifold $M$
in the $n$-dimensional unit Euclidean sphere $\s^n$ is biharmonic
if and only if
\begin{equation}\label{eq: caract_bih_spheres}
\left\{
\begin{array}{l}
\ \Delta^\perp H+\trace B(\cdot,A_H\cdot)-mH=0,
\\ \mbox{} \\
\ 4\trace A_{\nabla^\perp_{(\cdot)}H}(\cdot)+m\grad(\vert H
\vert^2)=0,
\end{array}
\right.
\end{equation}
where $A$ denotes the Weingarten operator, $B$ the second
fundamental form, $H$ the mean curvature vector field,
$\nabla^\perp$ and $\Delta^\perp$ the connection and the Laplacian
in the normal bundle of $M$ in $\s^n$.
\end{theorem}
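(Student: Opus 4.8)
The starting point is the explicit bitension field in \eqref{eq: tens+bitens E^n(c)}, namely $\tau_2(\varphi)=-m(\Delta H-mH)$. Since $m=\dimension M\geq 1$, biharmonicity of $\varphi$ is equivalent to the single equation $\Delta H=mH$, where $\Delta$ is the rough Laplacian on $\varphi^{-1}T\s^n$. The plan is to compute $\Delta H$ for the normal field $H$ in closed form and then split both sides of $\Delta H=mH$ into their tangential and normal components. Because $H$ is normal, $mH$ is purely normal, so the normal part of the identity will produce the first equation of \eqref{eq: caract_bih_spheres} and the tangential part will produce the second. Throughout I write $\bar\nabla$ for the Levi-Civita connection of $\s^n$ restricted along $M$, and I fix a local orthonormal frame $\{e_i\}$ that is geodesic at the point under consideration, so that the tangential part of $\nabla_{e_i}e_j$ vanishes there.

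The computation of $\Delta H$ proceeds by applying the Gauss and Weingarten formulas twice. First, since $H$ is normal, Weingarten gives $\bar\nabla_{e_i}H=-A_He_i+\nabla^\perp_{e_i}H$. Differentiating again, I use the Gauss formula on the tangential summand $-A_He_i$ and the Weingarten formula on the normal summand $\nabla^\perp_{e_i}H$; at the chosen point the correction term $\bar\nabla_{\nabla_{e_i}e_i}H$ drops out. Collecting normal contributions yields $(\Delta H)^\perp=\Delta^\perp H+\trace B(\cdot,A_H\cdot)$, and equating this to the normal field $mH$ gives exactly $\Delta^\perp H+\trace B(\cdot,A_H\cdot)-mH=0$, the first equation. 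Collecting the tangential contributions yields $(\Delta H)^\top=\sum_i\nabla_{e_i}(A_He_i)+\sum_i A_{\nabla^\perp_{e_i}H}e_i$, which must be set to zero.

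The tangential part is the main obstacle, and it is where the constant curvature of the ambient $\s^n$ is decisive. Expanding $\nabla_{e_i}(A_He_i)$ by the product rule separates off a term $\sum_i A_{\nabla^\perp_{e_i}H}e_i=\trace A_{\nabla^\perp_{(\cdot)}H}(\cdot)$ and leaves $\sum_i(\nabla_{e_i}A)_He_i$, where the covariant derivative of the shape operator must be interpreted so as to absorb the variation of $H$. To evaluate this I use the standard identity $\langle(\nabla_XA)_HY,Z\rangle=\langle(\nabla^\perp_XB)(Y,Z),H\rangle$ together with the Codazzi equation. In a space of constant curvature the normal part of the ambient curvature tensor vanishes, so $(\nabla^\perp_XB)(Y,Z)$ is totally symmetric in $X,Y,Z$; hence $\sum_i(\nabla^\perp_{e_i}B)(e_i,Z)=\nabla^\perp_Z\bigl(\trace B\bigr)=m\,\nabla^\perp_ZH$, and therefore $\sum_i(\nabla_{e_i}A)_He_i=\tfrac{m}{2}\grad(|H|^2)$ after using $\langle\nabla^\perp_ZH,H\rangle=\tfrac12 Z|H|^2$. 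Assembling the pieces gives $(\Delta H)^\top=2\trace A_{\nabla^\perp_{(\cdot)}H}(\cdot)+\tfrac{m}{2}\grad(|H|^2)$; setting this to zero and clearing the common factor $\tfrac12$ reproduces the second equation $4\trace A_{\nabla^\perp_{(\cdot)}H}(\cdot)+m\grad(|H|^2)=0$. The delicate points are the careful sign bookkeeping in the double Gauss--Weingarten expansion and the correct interpretation of $(\nabla_XA)_H$, since it is precisely the Codazzi symmetry that converts a frame-sum of derivatives of $A$ into the gradient of $|H|^2$ with the coefficient that yields the factors $4$ and $m$.
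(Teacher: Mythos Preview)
Your argument is correct and is precisely the standard derivation: reduce biharmonicity to $\Delta H=mH$ via \eqref{eq: tens+bitens E^n(c)}, expand $\Delta H$ through two applications of the Gauss--Weingarten formulas, and use the Codazzi symmetry in a space form to convert $\sum_i(\nabla^\perp_{e_i}B)(e_i,\cdot)$ into $m\nabla^\perp_{(\cdot)}H$, which produces the $\tfrac{m}{2}\grad|H|^2$ term and the extra copy of $\trace A_{\nabla^\perp_{(\cdot)}H}(\cdot)$ that accounts for the coefficient $4$. Note, however, that the paper does not supply its own proof of this theorem: it is quoted as a preliminary result from \cite{BYC2, CO1}, so there is no in-paper argument to compare against; your proof is essentially the one given in those references.
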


Using the main examples, two methods of construction for
proper-biharmonic submanifolds in spheres were given.

\begin{theorem}[Composition property, \cite{RCSMCO2}] \label{th: rm_minim}
Let $M$ be a minimal submanifold of\, $\s^{n-1}(a)\subset\s^n$.
Then $M$ is proper-biharmonic in $\s^{n}$ if and only if $a=\rrm$.
\end{theorem}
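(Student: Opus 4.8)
The plan is to compute, for the canonical inclusion $\varphi\colon M\to\s^n$, the mean curvature vector field $H$ of $M$ in $\s^n$ together with its rough Laplacian $\Delta H$, and then to read off biharmonicity from the expression for $\tau_2$ in \eqref{eq: tens+bitens E^n(c)}. I would start from the elementary geometry of the totally umbilical hypersphere $\s^{n-1}(a)$ in $\s^n$ (for $a\in(0,1]$): fixing a unit section $\eta$ of its normal bundle, one has $\nabla^{\s^n}_X\eta=-c\,X$ for every $X$ tangent to $\s^{n-1}(a)$, where $c=\sqrt{1-a^2}/a$; equivalently, the shape operator of $\s^{n-1}(a)$ in $\s^n$ equals $c\,\id$. (For instance, realize $\s^{n-1}(a)=\s^n\cap\{x:\langle x,v\rangle=\sqrt{1-a^2}\,\}$ for a fixed unit vector $v$, with $\eta=(v-\sqrt{1-a^2}\,x)/a$.)

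Let $B$ be the second fundamental form of $M$ in $\s^n$ and $B'$ that of $M$ in $\s^{n-1}(a)$. From the umbilicity of $\s^{n-1}(a)$ we get $B(X,Y)=B'(X,Y)+c\,\langle X,Y\rangle\,\eta$, and since $M$ is minimal in $\s^{n-1}(a)$, tracing gives $mH=\trace B=mc\,\eta$, so $H=c\,\eta$ and $|H|=c$ is constant. Next I would record two facts. First, $\eta$ is parallel in the normal bundle of $M$ in $\s^n$: for $X\in T_pM$ the vector $\nabla^{\s^n}_X\eta=-c\,X$ is tangent to $M$, so its $M$-normal component vanishes; thus $\nabla^\perp H=0$ and $\grad|H|^2=0$, which already disposes of the second equation in Theorem \ref{th: bih subm E^n(c)}. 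Second, viewing $\eta$ as a section of $\varphi^{-1}T\s^n$, one has $\nabla_X\eta=-c\,d\varphi(X)$, hence $\nabla^2_{X,X}\eta=-c\,(\nabla d\varphi)(X,X)=-c\,B(X,X)$, and therefore $\Delta\eta=-\trace\nabla^2\eta=c\,\trace B=c\,mH=mc^2\,\eta$, whence $\Delta H=c\,\Delta\eta=mc^2\,H$.

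Substituting into \eqref{eq: tens+bitens E^n(c)} gives $\tau_2(\varphi)=-m(\Delta H-mH)=-m^2(c^2-1)H$; hence $\varphi$ is biharmonic if and only if $H=0$ or $c^2=1$. Now $H=0$ means $c=0$, i.e. $a=1$, in which case $M$ is minimal in $\s^n$ and $\varphi$ is harmonic, not proper-biharmonic. On the other hand $c^2=1$ is equivalent to $1-a^2=a^2$, i.e. $a=\rrm$, and then $H=\eta\neq0$, so $M$ is non-minimal in $\s^n$ and $\varphi$ is proper-biharmonic. For every remaining $a\in(0,1)$ one has $0<c\neq1$, so $\tau_2(\varphi)\neq0$. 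This yields exactly the stated equivalence. (As a consistency check one may instead invoke Theorem \ref{th: bih subm E^n(c)}: its second equation holds since $\nabla^\perp H=0$ and $|H|$ is constant, while $\Delta^\perp H=0$ and $A_H=c\,A_\eta=c^2\,\id$ reduce its first equation to $m(c^2-1)H=0$.) I do not foresee a genuine obstacle; the only care needed is in the sign and normalization conventions for the umbilical geometry of $\s^{n-1}(a)\subset\s^n$ and in the second-order computation of $\Delta\eta$.
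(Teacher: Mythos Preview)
The paper does not actually prove this theorem; it is stated in the Preliminaries with a citation to \cite{RCSMCO2} and used as a known ingredient, so there is no ``paper's own proof'' to compare against. Your argument is correct and is essentially the standard one: once you identify $H=c\,\eta$ with $c=\sqrt{1-a^2}/a$, observe $\nabla^\perp\eta=0$, and compute $\Delta\eta=mc^2\eta$ via $\nabla^2_{X,X}\eta=-c\,B(X,X)$, the formula $\tau_2(\varphi)=-m^2(c^2-1)H$ drops out of \eqref{eq: tens+bitens E^n(c)} and the equivalence is immediate. The alternative verification through Theorem~\ref{th: bih subm E^n(c)} that you sketch at the end (using $\Delta^\perp H=0$ and $A_H=c^2\id$) is equally valid and perhaps closer in spirit to how such results are typically derived in this literature.
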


We note that such submanifolds are pseudo-umbilical, i.e.
$A_H=|H|^2\id$, have parallel mean curvature vector field and mean
curvature $|H|=1$.

\begin{theorem}[Product composition property, \cite{RCSMCO2}]\label{th:hipertor}
Let $M_1^{m_1}$ and $M_2^{m_2}$ be two mini\-mal submanifolds of
$\s^{n_1}(r_1)$ and $\s^{n_2}(r_2)$, respectively, where
$n_1+n_2=n-1$, $r_1^2+r_2^2=1$. Then $M_1\times M_2$ is
proper-biharmonic in $\s^n$ if and only if $r_1=r_2=\rrm$ and
$m_1\neq m_2$.
\end{theorem}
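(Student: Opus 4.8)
The plan is to apply the characterization of Theorem~\ref{th: bih subm E^n(c)} to the product submanifold, viewed through the chain of isometric immersions
\[
M_1\times M_2\hookrightarrow T:=\s^{n_1}(r_1)\times\s^{n_2}(r_2)\hookrightarrow\s^n\hookrightarrow\r^{n+1},
\]
where we split $\r^{n+1}=\r^{n_1+1}\times\r^{n_2+1}$ and write a point of $T$ as $(x,y)$ with $|x|=r_1$, $|y|=r_2$, $r_1^2+r_2^2=1$. The torus $T$ is a hypersurface of $\s^n$ with unit normal field $\xi=\bigl(\tfrac{r_2}{r_1}x,\,-\tfrac{r_1}{r_2}y\bigr)$, and a direct differentiation in $\r^{n+1}$ shows that the $\xi$-shape operator $A_\xi$ of $T$ in $\s^n$ acts as $\mu_1\id=-\tfrac{r_2}{r_1}\id$ on the first factor and as $\mu_2\id=\tfrac{r_1}{r_2}\id$ on the second. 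Since $\xi$ is also normal to $M_1\times M_2$, the same eigenvalues $\mu_1,\mu_2$ describe the $\xi$-shape operator of $M_1\times M_2$.

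First I would compute the mean curvature vector field $H$ of $M_1\times M_2$ in $\s^n$. Writing the second fundamental form along the chain as $B=B^{M_1\times M_2\to T}+B^{T\to\s^n}$ and tracing over an adapted orthonormal frame, the contribution of $B^{M_1\times M_2\to T}$ equals $m_1H^{M_1}+m_2H^{M_2}=0$ by minimality of $M_1$ in $\s^{n_1}(r_1)$ and of $M_2$ in $\s^{n_2}(r_2)$. Hence, with $m=m_1+m_2$,
\[
H=\frac{1}{m}\bigl(m_1\mu_1+m_2\mu_2\bigr)\xi=:c\,\xi .
\]
A short computation gives $\nabla^{\s^n}_X\xi$ tangent to $M_1\times M_2$ for every $X$ tangent to $M_1\times M_2$, whence $\nabla^\perp H=0$ and $|H|^2=c^2$ is constant. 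Consequently the second equation of~\eqref{eq: caract_bih_spheres} is automatically satisfied, and it remains only to analyse the first one.

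Next I would evaluate $\Delta^\perp H+\trace B(\cdot,A_H\cdot)-mH$. Parallelism of $H$ kills $\Delta^\perp H$, while $A_H=c\,A_\xi$ is diagonal with eigenvalues $c\mu_1$ on $M_1$-directions and $c\mu_2$ on $M_2$-directions. When tracing $B(\cdot,A_H\cdot)$, the factor-wise tangential sums of $B(e_i,e_i)$ again reduce to the vanishing $H^{M_1}$, $H^{M_2}$ by minimality and drop out, leaving only the $\xi$-components, so that
\[
\trace B(\cdot,A_H\cdot)=c\bigl(m_1\mu_1^2+m_2\mu_2^2\bigr)\xi .
\]
The first biharmonic equation therefore collapses to the single scalar condition $c\bigl[m_1\mu_1^2+m_2\mu_2^2-m\bigr]=0$, where $\mu_1^2=r_2^2/r_1^2$ and $\mu_2^2=r_1^2/r_2^2$.

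Finally I would extract the classification. Biharmonicity holds iff $c=0$ or $m_1\mu_1^2+m_2\mu_2^2=m$; setting $t=r_1^2$ (so $r_2^2=1-t$) and clearing denominators, the second alternative becomes the quadratic $2(m_1+m_2)t^2-(3m_1+m_2)t+m_1=0$, whose roots are exactly $t=\tfrac12$ (i.e.\ $r_1=r_2=\rrm$) and $t=m_1/(m_1+m_2)$. The root $t=m_1/(m_1+m_2)$ is precisely the condition $c=0$, i.e.\ the minimal (harmonic) case, which is excluded for proper-biharmonicity. Hence $M_1\times M_2$ is \emph{proper}-biharmonic iff $c\neq0$ and $t=\tfrac12$; and at $t=\tfrac12$ one computes $c=(m_2-m_1)/m$, which is nonzero precisely when $m_1\neq m_2$. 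This yields the stated equivalence. The main obstacle I expect is the careful bookkeeping of the normal bundle of $M_1\times M_2$ in $\s^n$ in the two middle steps — verifying that $\xi$ is parallel and that the tangential traces of $B$ reduce to the vanishing mean curvatures $H^{M_1}$, $H^{M_2}$ — since this is exactly what decouples the two factors and reduces the full system to one scalar equation.
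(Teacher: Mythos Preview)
The paper does not actually prove this theorem: it is quoted in the Preliminaries as a known result from \cite{RCSMCO2}, with no argument given. So there is no ``paper's own proof'' to compare against; your proposal stands on its own.

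That said, your argument is correct and is precisely the standard one. The key computations---that $\xi=(\tfrac{r_2}{r_1}x,-\tfrac{r_1}{r_2}y)$ is the unit normal of the Clifford-type torus $T$ in $\s^n$, that $A_\xi$ has eigenvalues $\mu_1=-r_2/r_1$ and $\mu_2=r_1/r_2$ on the two factors, that $H=c\xi$ with $c=(m_1\mu_1+m_2\mu_2)/m$ is parallel in the normal bundle, and that minimality of the factors kills the components of $\trace B(\cdot,A_H\cdot)$ normal to $M_i$ inside $\s^{n_i}(r_i)$---are all right. The reduction to the scalar equation $c\bigl[m_1\mu_1^2+m_2\mu_2^2-m\bigr]=0$ and its factorization over $t=r_1^2$ into the roots $t=\tfrac12$ and $t=m_1/(m_1+m_2)$ (the latter being exactly the minimal case $c=0$) is clean and gives the equivalence as stated. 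The ``obstacle'' you flag is indeed the only place requiring care, and you have handled it: once $\nabla^\perp\xi=0$ and the factor-wise traces of $B$ collapse to $m_i\mu_i\xi$, the rest is algebra.
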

The proper-biharmonic submanifolds obtained in this way are no
longer pseudo-umbilical, but still have parallel mean curvature
vector field and their mean curvature is bounded, $|H|\in(0,1)$.
For dimension reasons, this second method cannot be applied in
order to produce proper-biharmonic surfaces in $\s^4$.

In \cite{AEMS, S1} the authors obtained explicit examples of
proper-biharmonic submanifolds in $\s^5$ with constant mean
curvature, which are neither pseudo-umbilical nor of parallel mean
curvature vector field.

We note that all known examples of proper-biharmonic submanifolds in
$\s^n$ have constant mean curvature.

We end by recalling here the following results which are needed in
the next section.

\begin{theorem}[\cite{BMO1}]\label{th: classif_pseudo_umb_codim2}
Let $M^m$ be a pseudo-umbilical submanifold in $\s^{m+2}$, $m\neq
4$. Then $M$ is proper-biharmonic in $\s^{m+2}$ if and only if it
is minimal in $\s^{m+1}(\rrm)$.
\end{theorem}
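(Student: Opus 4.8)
The plan is to prove the two implications separately; the forward direction is immediate and all the work lies in the converse. If $M$ is minimal in $\s^{m+1}(\rrm)\subset\s^{m+2}$, then Theorem~\ref{th: rm_minim}, applied with $n=m+2$ and $a=\rrm$, says precisely that $M$ is proper-biharmonic in $\s^{m+2}$, so nothing more is needed. For the converse, assume $M^m$ is pseudo-umbilical and proper-biharmonic in $\s^{m+2}$. Being non-minimal, $H\not\equiv0$, so I would work on the open set $U=\{p:H(p)\neq0\}$ and at the end invoke a connectedness argument (a positive, locally constant $|H|$ cannot approach $0$) to conclude $U=M$. On $U$ set $E_3=H/|H|$ and pick a unit $E_4\perp E_3$, so that $\{E_3,E_4\}$ frames the normal bundle; write $A_j=A_{E_j}$ and $\nabla^\perp_XE_3=\omega(X)E_4$ for a $1$-form $\omega$. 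Pseudo-umbilicity $A_H=|H|^2\id$ reads $A_3=|H|\id$, while $\trace A_4=m\langle H,E_4\rangle=0$. Since $\trace B(\cdot,A_H\cdot)=m|H|^2H$, the first equation of \eqref{eq: caract_bih_spheres} becomes $\Delta^\perp H=m(1-|H|^2)H$, and a direct computation turns the second into
\begin{equation*}
(m+2)\grad|H|+2A_4(\omega^\sharp)=0,\tag{$\ast$}
\end{equation*}
where $\omega^\sharp$ is dual to $\omega$.

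The crux, and the step I expect to be the main obstacle, is to prove that $|H|$ is constant; this is exactly where the hypothesis $m\neq4$ is used. I would bring in the Codazzi equation of $M$ in $\s^{m+2}$. Its $E_3$-component, computed for $A_3=|H|\id$, simplifies after cancellation to
\begin{equation*}
X(|H|)\,Y-Y(|H|)\,X=\omega(X)A_4Y-\omega(Y)A_4X .
\end{equation*}
Tracing this identity over $Y$ and using $\trace A_4=0$ together with the symmetry of $A_4$ gives $(m-1)\grad|H|=-A_4(\omega^\sharp)$. Comparing with $(\ast)$, which reads $2A_4(\omega^\sharp)=-(m+2)\grad|H|$, I obtain $\tfrac12(m-4)\grad|H|=0$. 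Hence for $m\neq4$ we get $\grad|H|=0$, so $|H|$ is a positive constant (and then $U=M$).

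Next I would upgrade this to parallelism of $H$ and $|H|=1$, which is where $m\geq2$ enters. With $\grad|H|=0$ the $E_3$-Codazzi identity collapses to $\omega(X)A_4Y=\omega(Y)A_4X$. At a point where $\omega\neq0$ this forces $A_4$ to have rank at most one; by symmetry $A_4=\mu\,\omega^\sharp\otimes\omega^\sharp$, and $\trace A_4=0$ then yields $\mu|\omega|^2=0$, so $A_4$ vanishes on the open set $\{\omega\neq0\}$. There the $E_4$-component of Codazzi reduces to $\omega(X)|H|\langle Y,Z\rangle=\omega(Y)|H|\langle X,Z\rangle$, i.e.\ $\omega(X)Y=\omega(Y)X$; choosing $Y\neq0$ with $\omega(Y)=0$ (possible since $m\geq2$) and $X=\omega^\sharp$ forces $|\omega|^2=0$, a contradiction. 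Thus $\omega\equiv0$, whence $\nabla^\perp H=\grad|H|\,E_3+|H|\,\omega\,E_4=0$, i.e.\ the mean curvature vector field is parallel; substituting $\nabla^\perp H=0$ into $\Delta^\perp H=m(1-|H|^2)H$ gives $|H|=1$.

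Finally I would reduce the codimension. Since $A_3=\id$ and $\nabla^\perp E_3=0$, regarding $M\subset\s^{m+2}\subset\r^{m+3}$ with position vector $x$ and differentiating the $\r^{m+3}$-valued map $x+E_3$ gives $D_X(x+E_3)=X-X=0$, so $x+E_3\equiv c$ is a constant vector with $|c|^2=2$. Then $\langle x,c\rangle=1$, so $M$ lies in the hyperplane $\{\langle x,c\rangle=1\}$, which sits at distance $\rrm$ from the origin; its intersection with $\s^{m+2}$ is a hypersphere $\s^{m+1}(\rrm)$ whose unit normal in $\s^{m+2}$ is $E_3$. As $E_4$ is then tangent to $\s^{m+1}(\rrm)$ and $\trace A_4=0$, the submanifold $M$ is minimal in $\s^{m+1}(\rrm)$, as claimed.
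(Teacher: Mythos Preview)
The paper does not actually contain a proof of this statement: Theorem~\ref{th: classif_pseudo_umb_codim2} is quoted from \cite{BMO1} in the Preliminaries section and then used as a black box inside the proof of Theorem~\ref{th: bih surf s4_parallel H}. There is therefore nothing in the present paper to compare your argument against.

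That said, your proposal is essentially a correct self-contained proof. The identity $(m-1)\grad|H|=-A_4(\omega^\sharp)$ obtained by tracing the $E_3$-component of Codazzi, combined with the tangential biharmonic equation $(\ast)$, yields $(m-4)\grad|H|=0$ (your extra factor $\tfrac12$ is a harmless slip), and this is exactly where the restriction $m\neq4$ enters. The subsequent use of the $E_4$-component of Codazzi to rule out $\omega\neq0$, the deduction $|H|=1$ from $\Delta^\perp H=m(1-|H|^2)H$, and the affine-hyperplane argument $D_X(x+E_3)=0$ for the codimension reduction are all standard and correctly executed. The only points worth tightening are: (i) the connectedness assumption on $M$ implicit in passing from ``$|H|$ locally constant and positive on $U$'' to ``$U=M$'', and (ii) making explicit that $(\nabla A_4)$ is used only on the open set $\{\omega\neq0\}$, where $A_4$ has already been shown to vanish identically, so its covariant derivative vanishes there too. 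If you want to compare with the original argument, you will need to consult \cite{BMO1} directly.
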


\begin{theorem}[\cite{BMO1}]\label{th: classif_surf_parallelH}
Let $M^2$ be a surface with parallel mean curvature vector field
in $\s^n$. Then $M$ is proper-biharmonic in $\s^n$ if and only if
it is minimal in $\s^{n-1}(\rrm)$.
\end{theorem}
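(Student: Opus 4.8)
The plan is to run the biharmonicity criterion of Theorem~\ref{th: bih subm E^n(c)} with $m=2$. The converse direction is immediate: if $M$ is minimal in $\s^{n-1}(\rrm)$, then the Composition Property (Theorem~\ref{th: rm_minim}) with $a=\rrm$ shows $M$ is proper-biharmonic in $\s^n$, and the remark following that theorem guarantees such a surface has parallel mean curvature vector field, so it meets the hypothesis. For the direct implication I would first exploit $\nabla^\perp H=0$: it gives $\Delta^\perp H=0$ and $\nabla^\perp_{(\cdot)}H=0$, so the second equation of \eqref{eq: caract_bih_spheres} holds automatically and $|H|=:c$ is a positive constant ($c>0$ since $M$ is not minimal). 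The system thus collapses to the single vector identity $\trace B(\cdot,A_H\cdot)=2H$.

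Next I would take the $\s^n$-normal components of this identity, using $\langle\trace B(\cdot,A_H\cdot),\eta\rangle=\trace(A_\eta A_H)$. Testing against $\eta=\xi:=H/c$ gives $\trace A_\xi^2=2$, while testing against $\eta\perp\xi$ gives $\trace(A_\eta A_\xi)=0$. Since $\trace A_\xi=2c$, the inequality $\trace A_\xi^2\ge\frac{1}{2}(\trace A_\xi)^2$ forces $c\le1$, with equality exactly when $A_\xi=c\,\id$. Writing $A_\xi=c\,\id+S$ with $S$ the trace-free part, I obtain $|S|^2=2(1-c^2)$, so the whole theorem reduces to showing that the pseudo-umbilical case $c=1$ (equivalently $S=0$, $A_H=\id$) is the only possibility.

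The crux, and the step I expect to be the main obstacle, is ruling out $c<1$. Assume $c<1$: then $A_\xi$ has two distinct constant eigenvalues $c\pm\mu$ with $\mu=\sqrt{1-c^2}>0$. Because $\xi$ is parallel and $\s^n$ has constant curvature, the Codazzi equation makes $A_\xi$ a Codazzi tensor; having distinct constant eigenvalues, its principal orthonormal frame $\{E_1,E_2\}$ is then parallel and $M$ is flat. Decomposing $B(E_1,E_1)=(c+\mu)\xi+P$, $B(E_2,E_2)=(c-\mu)\xi-P$, $B(E_1,E_2)=R$ with $P,R\in\nu:=\xi^\perp$, the condition $\trace(A_\eta A_\xi)=0$ reads $2\mu\langle P,\eta\rangle=0$, so $P=0$, and each $A_\eta$ ($\eta\in\nu$) is purely off-diagonal in $\{E_1,E_2\}$. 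Now I invoke the Ricci equation: since $\xi$ is parallel, $R^\perp(\cdot,\cdot)\xi=0$, so the normal-curvature identity reduces to $\langle[A_\xi,A_\eta]E_1,E_2\rangle=\langle R^\perp(E_1,E_2)\xi,\eta\rangle=0$, forcing $[A_\xi,A_\eta]=0$; as $A_\xi$ has distinct eigenvalues, $A_\eta$ must be diagonal, whence $R=0$. But the Gauss equation together with flatness gives $0=K=2c^2-|R|^2$, i.e. $|R|^2=2c^2>0$, a contradiction. Hence $c=1$ and $M$ is pseudo-umbilical with $A_H=\id$, $|H|=1$.

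Finally, I would locate the ambient hypersphere explicitly. Viewing $M\subset\s^n\subset\r^{n+1}$ with position vector $x$ and flat connection $\tilde\nabla$, a direct computation gives $\tilde\nabla_X(x+H)=X-A_HX=X-X=0$ for every tangent $X$, so $x+H\equiv p_0$ is a constant vector with $|p_0|^2=|x|^2+2\langle x,H\rangle+|H|^2=2$. Then $\langle x,p_0\rangle=|x|^2+\langle x,H\rangle\equiv1$ shows that $M$ lies in the hyperplane section $\{\langle y,p_0\rangle=1\}\cap\s^n=\s^{n-1}(\rrm)$, a totally umbilical hypersphere whose mean curvature vector in $\s^n$ coincides with $H$; consequently $M$ is minimal in $\s^{n-1}(\rrm)$, as claimed.
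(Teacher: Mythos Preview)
The paper does not prove this statement: Theorem~\ref{th: classif_surf_parallelH} is quoted from \cite{BMO1} in the Preliminaries and then invoked as a black box in the proof of Theorem~\ref{th: bih surf s4_parallel H}. There is therefore no in-paper argument to compare yours against.

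That said, your argument is correct and self-contained. The reduction of \eqref{eq: caract_bih_spheres} under $\nabla^\perp H=0$ to the single equation $\trace B(\cdot,A_H\cdot)=2H$, the extraction of $\trace A_\xi^2=2$ and $\trace(A_\eta A_\xi)=0$ for $\eta\perp\xi$, and the resulting bound $|H|\le 1$ with equality exactly in the pseudo-umbilical case, are all straightforward and valid. In the hypothetical case $c=|H|<1$, your use of Codazzi for the parallel section $\xi$ (giving a parallel eigenframe of $A_\xi$ and hence flatness), of $\trace(A_\eta A_\xi)=0$ with $\mu\neq0$ to force $P=0$, and of the Ricci equation $R^\perp(\cdot,\cdot)\xi=0$ to force $[A_\xi,A_\eta]=0$ and thus $R=0$, all check out; Gauss then gives $0=K=2c^2$, the desired contradiction. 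The final step, showing $x+H$ is constant in $\r^{n+1}$ and reading off that $M$ lies minimally in $\s^{n-1}(\rrm)$, is the standard argument and is correctly executed.
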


\section{Biharmonic surfaces with constant mean curvature in $\s^4$}

We shall prove the following

\begin{theorem}\label{th: bih surf s4_parallel H}
Let $M^2$ be a  proper-biharmonic constant mean curvature surface
in $\s^4$. Then $M^2$ is minimal in $\s^3(\rrm)$.
\end{theorem}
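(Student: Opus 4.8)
The strategy is to reduce to the known classification in Theorem~\ref{th: classif_surf_parallelH} by showing that a proper-biharmonic constant mean curvature surface $M^2$ in $\s^4$ must in fact have \emph{parallel} mean curvature vector field. Since $M^2$ is a surface, the normal bundle has rank $2$, so one has two natural normal directions: the unit vector $E_3 = H/|H|$ (well defined as $|H|$ is a positive constant, $M$ being proper-biharmonic), and a unit normal $E_4$ orthogonal to it. The plan is to feed $|H| = \mathrm{const}$ into the system \eqref{eq: caract_bih_spheres}. The second equation then reads $\trace A_{\nabla^\perp_{(\cdot)}H}(\cdot) = 0$; writing $\nabla^\perp_X H = |H|\,\nabla^\perp_X E_3 = |H|\,\langle \nabla^\perp_X E_3, E_4\rangle E_4 =: |H|\,\omega(X) E_4$ for a $1$-form $\omega$, this becomes $\trace A_{\omega(\cdot)E_4}(\cdot) = 0$, i.e. a trace condition tying $\omega$ to the shape operator $A_4 := A_{E_4}$. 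The first equation of \eqref{eq: caract_bih_spheres}, after projecting onto $E_3$ and $E_4$, splits into a relation involving $|A_3|^2$ (where $A_3 := A_H/|H| = |H|\,\mathrm{Id}$-correction terms) and the Gauss-type scalar, and a relation forcing a $\nabla^\perp$-type equation for $\omega$.

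The next step is to exploit that $\dim M = 2$ heavily: the shape operators $A_3, A_4$ are $2\times 2$ symmetric, and $\trace A_3 = 2|H|$, $\trace A_4 = 0$. One chooses a local orthonormal frame $\{E_1, E_2\}$ diagonalizing $A_4$ (possible pointwise), writes out the trace condition from the second biharmonic equation in this frame, and combines it with the Codazzi equation for $E_4$ in $\s^4$, which controls $\nabla^\perp E_4$ and hence $\omega$ again. The aim is to derive that $\omega \equiv 0$, which is exactly $\nabla^\perp H = 0$. I expect this to come out of a first-order ODE / algebraic system for the components of $\omega$ and the off-diagonal behaviour of $A_3, A_4$, where the proper-biharmonicity constant $|H| \ne 0$ and the specific coefficient $4$ versus $m = 2$ in \eqref{eq: caract_bih_spheres} play the decisive role in preventing nontrivial solutions.

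The main obstacle is the coupling in the first equation of \eqref{eq: caract_bih_spheres}: $\Delta^\perp H$ and $\trace B(\cdot, A_H \cdot)$ mix the two normal directions and involve $\omega$ quadratically as well as its derivative, so the system is not immediately a clean linear system in $\omega$. Handling this will likely require using the Ricci equation for the normal bundle (the normal curvature of a surface in $\s^4$ is governed by $[A_3, A_4]$) together with Gauss and Codazzi, to close the system. A possible shortcut, if the direct computation resists, is to split into the case $A_4 \equiv 0$ on an open set (then $M$ lies in a totally geodesic $\s^3$ and one invokes the $\s^3$ classification or Theorem~\ref{th: classif_pseudo_umb_codim2} after checking pseudo-umbilicity), versus the case $A_4 \not\equiv 0$, where one argues by contradiction using the real-analyticity of biharmonic immersions to propagate identities from an open set to all of $M$. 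Once $\nabla^\perp H = 0$ is established, Theorem~\ref{th: classif_surf_parallelH} applies directly and concludes that $M^2$ is minimal in $\s^3(\rrm)$.
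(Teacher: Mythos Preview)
Your overall strategy---reduce to $\nabla^\perp H = 0$ and then invoke Theorem~\ref{th: classif_surf_parallelH}---and your frame setup with $E_3 = H/|H|$, $E_4$, and the normal connection form $\omega(X)=\langle\nabla^\perp_X E_3,E_4\rangle$ match the paper exactly. However, the execution plan misidentifies where the real work lies, and the proposed shortcut contains a gap.

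You expect the main obstacle to be the first (normal) equation of \eqref{eq: caract_bih_spheres} and anticipate needing the Ricci equation; in fact the paper uses \emph{only} the tangent equation. Written in the frame $\{E_1,E_2\}$, that equation becomes the $2\times 2$ linear homogeneous system $\omega(E_1)A_4(E_1)+\omega(E_2)A_4(E_2)=0$ in the unknowns $\omega(E_1),\omega(E_2)$, with coefficient matrix $A_4$. Since $\trace A_4=0$, its determinant is $-\tfrac{1}{2}|A_4|^2$. Hence if $\nabla^\perp H\neq 0$ on an open set $U$, one gets $A_4=0$ on $U$ immediately---this purely algebraic step is the crux you are missing.

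Your shortcut then claims that $A_4\equiv 0$ forces $M$ into a totally geodesic $\s^3$; this is not correct. Codimension reduction requires the first normal bundle $\mathbb{R}E_3$ to be \emph{parallel}, which is exactly $\omega=0$, the very conclusion sought. Nor does $A_4=0$ imply pseudo-umbilicity, since $A_3$ need not be a multiple of the identity. The paper instead argues by contradiction on $U$: if $A_3=|H|\,\id$ there, Theorem~\ref{th: classif_pseudo_umb_codim2} gives $\nabla^\perp H=0$, contradicting the assumption; if not, one diagonalizes $A_3$ with distinct eigenvalues $k_1\neq k_2$, and Codazzi with $\eta=E_4$ forces one eigenvalue (say $k_2$) to vanish, so $k_1=2|H|$ is a nonzero constant. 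Codazzi with $\eta=E_3$ then makes $M$ flat, and the Gauss equation yields $1=-k_1k_2=0$, the desired contradiction.
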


\begin{proof}

Following \cite{BYC_ISH}, we shall first prove that any
proper-biharmonic constant mean curvature surface in $\s^4$ has
parallel mean curvature vector field. Then we shall conclude by
using Theorem \ref{th: classif_surf_parallelH}.

Denote by $H$ the mean curvature vector field of $M^2$ in $\s^4$.
Since $M$ is proper-biharmonic with constant mean curvature, its
mean curvature does not vanish at any point and we denote by
\begin{equation}\label{eq: E_3}
E_3=\displaystyle{\frac{H}{|H|}}\in C(NM).
\end{equation}
Consider $\{E_1,E_2\}$ to be a local orthonormal frame field on $M$
around an arbitrary fixed point $p\in M$ and let $E_4$ be a local
unit section in the normal bundle, orthogonal to $E_3$. We can
assume that $\{E_1,E_2,E_3, E_4\}$ is the restriction of a local
orthonormal frame field around $p$ on $\s^4$, also denoted by
$\{E_1,E_2,E_3,E_4\}$.

Denote by $B$ the second fundamental form of $M$ in $\s^4$ and by
$A_3$ and $A_4$ the Weingarten operators associated to $E_3$ and
$E_4$, respectively.

Let $\nabla^{\mathbb{S}^4}$ and $\nabla$ be the Levi-Civita
connections on $\s^4$ and on $M$, respectively, and denote by
$\omega_A^B$ the connection $1$-forms of $\s^4$ with respect to
$\{E_1,E_2,E_3,E_4\}$, i.e.
\begin{equation}\label{eq: conn_forms}
\nabla^{\mathbb{S}^4} E_A=\omega_A^B E_B,\qquad A,B=1,\ldots,4.
\end{equation}

From \eqref{eq: E_3} we have $H=|H|E_3$ and, since
$2H=B(E_1,E_1)+B(E_2,E_2)$, we obtain that
\begin{eqnarray}\label{eq: trace A4}
0&=&2\langle H,E_4\rangle=\langle B(E_1,E_1),E_4\rangle+\langle
B(E_2,E_2),E_4\rangle\nonumber\\
&=&\langle A_4(E_1),E_1\rangle+\langle A_4(E_2),E_2\rangle,
\end{eqnarray}
i.e. $\trace A_4=0$. As a consequence, we have
\begin{eqnarray}\label{eq: norm A4}
|A_4|^2&=&|A_4(E_1)|^2+|A_4(E_2)|^2\nonumber\\
&=&\langle A_4(E_1),E_1\rangle^2+2\langle A_4(E_1),
E_2\rangle^2+\langle A_4(E_2),E_2 \rangle^2\nonumber\\
&=& 2 \big(\langle A_4(E_1),E_1\rangle^2+\langle A_4(E_1),
E_2\rangle^2\big).
\end{eqnarray}

The tangent part of the biharmonic equation \eqref{eq:
caract_bih_spheres} now writes
\begin{equation}\label{eq: caract_bih_M^2}
A_{\nabla^\perp_{E_1}E_3}(E_1)+A_{\nabla^\perp_{E_2}E_3}(E_2)=0.
\end{equation}
Since
\begin{eqnarray*}
\nabla^\perp_{E_1}E_3 &=& \langle \nabla^\perp_{E_1}E_3, E_3\rangle
E_3 + \langle \nabla^\perp_{E_1}E_3, E_4\rangle E_4=\langle
\nabla^{\mathbb{S}^4}_{E_1}E_3, E_4\rangle
E_4\\&=&\omega^4_3(E_1)E_4,
\end{eqnarray*}
and
$$
\nabla^\perp_{E_2}E_3 = \omega^4_3(E_2)E_4,
$$
from \eqref{eq: caract_bih_M^2} we get
\begin{equation}\label{eq: omega_A4}
\omega^4_3(E_1)A_4(E_1)+\omega^4_3(E_2)A_4(E_2)=0.
\end{equation}
Considering now the scalar product by $E_1$ and $E_2$ in
\eqref{eq: omega_A4}, we obtain
\begin{equation}\label{eq: omega_A4_bis}
\left\{
\begin{array}{l}
\ \langle A_4(E_1), E_1\rangle \omega^4_3(E_1) +\langle A_4(E_2),
E_1\rangle \omega^4_3(E_2)=0,
\\ \mbox{} \\
\ \ \langle A_4(E_1), E_2\rangle \omega^4_3(E_1) +\langle
A_4(E_2), E_2\rangle \omega^4_3(E_2)=0.
\end{array}
\right.
\end{equation}
Equations \eqref{eq: omega_A4_bis} can be thought of as a linear
homogeneous system in $\omega^4_3(E_1)$ and $\omega^4_3(E_2)$. By
using \eqref{eq: trace A4} and \eqref{eq: norm A4}, the determinant
of this system is equal to $\displaystyle{-\frac{1}{2}|A_4|^2}$.

Suppose now that $(\nabla^\perp H)(p)\neq 0$. Then there exists a
neighborhood $U$ of $p$ in $M$ such that $\nabla^\perp H\neq 0$,
at any point of $U$. Since
$$
\nabla^\perp H=|H|\nabla^\perp
E_3=|H|\{\omega^4_3(E_1)E_1^\flat\otimes
E_4+\omega^4_3(E_2)E_2^\flat\otimes E_4\},
$$
the hypothesis $\nabla^\perp H\neq 0$ on $U$ implies that
\eqref{eq: omega_A4_bis} admits non-trivial solutions at any point
of $U$. Therefore, the determinant of \eqref{eq: omega_A4_bis} is
zero, which means that $|A_4|^2=0$, i.e. $A_4=0$ on $U$.

We have two cases.

\noindent {\bf Case I.} If $U$ is pseudo-umbilical in $\s^4$, i.e.
$A_3=|H|\id$, from Theorem \ref{th: classif_pseudo_umb_codim2} we
get that $U$ is minimal in $\s^3(\rrm)$ and we have a
contradiction, since any minimal surface in $\s^3(\rrm)$ has
parallel mean curvature vector field in $\s^4$.

\noindent {\bf Case II.} Suppose that there exists $q\in U$ such
that $A_3(q)\neq |H|\id$. Then, eventually by restricting $U$, we
can suppose that $A_3\neq |H|\id$ on $U$. Since the principal
curvatures of $A_3$ have constant multiplicity $1$, we can suppose
that $E_1$ and $E_2$ are such that
$$
A_3(E_1)=k_1E_1, \qquad A_3(E_2)=k_2E_2,
$$
where $k_1\neq k_2$ at any point of $U$. As $A_4=0$, we obtain
\begin{equation}\label{eq: expr B}
B(E_1,E_1)=k_1E_3,\quad B(E_1,E_2)=0,\quad B(E_2,E_2)=k_2E_3,
\end{equation}
on $U$.

\noindent In the following we shall use the Codazzi and Gauss
equations in order to get to a contradiction.

\noindent The Codazzi equation is given in this setting by
\begin{equation}\label{eq: Codazzi}
0=(\nabla^{\mathbb{S}^4}_X B)(Y,Z,\eta)-(\nabla^{\mathbb{S}^4}_Y
B)(X,Z,\eta), \quad \forall X, Y, Z\in C(TM), \forall \eta\in
C(NM),
\end{equation}
where $\nabla^{\mathbb{S}^4}_X B$ is defined by
\begin{eqnarray*}
(\nabla^{\mathbb{S}^4}_X B)(Y,Z,\eta)&=&X\langle
B(Y,Z),\eta\rangle -\langle B(\nabla_X Y,Z),\eta\rangle-\langle
B(Y,\nabla_X Z),\eta\rangle\\&&-\langle
B(Y,Z),\nabla^\perp_X\eta\rangle.
\end{eqnarray*}
For $X=Z=E_1$, $Y=E_2$ and $\eta=E_3$, equation \eqref{eq:
Codazzi} leads to
\begin{eqnarray}\label{eq: Codazzi1}
0&=&E_1\langle B(E_2,E_1),E_3\rangle-E_2\langle
B(E_1,E_1),E_3\rangle\nonumber\\&&-\langle B(\nabla_{E_1}E_2, E_1),
E_3\rangle+\langle B(\nabla_{E_2}E_1,
E_1),E_3\rangle\nonumber\\&&-\langle B(E_2,\nabla_{E_1}E_1),
E_3\rangle+\langle B(E_1,\nabla_{E_2}E_1), E_3)\nonumber\\&&-
\langle B(E_2,E_1),\nabla^\perp_{E_1}E_3\rangle +\langle
B(E_1,E_1),\nabla^\perp_{E_2}E_3\rangle.
\end{eqnarray}
Now, from \eqref{eq: expr B} we have
$$
B(\nabla_{E_1}E_2, E_1) = k_1\omega^1_2(E_1) E_3, \qquad
B(E_2,\nabla_{E_1}E_1) = -k_2\omega^1_2(E_1) E_3,
$$
$$
B(\nabla_{E_2}E_1, E_1) = 0, \qquad \langle
B(E_1,E_1),\nabla^\perp_{E_2}E_3\rangle=0,
$$
thus \eqref{eq: Codazzi1} implies
\begin{equation}\label{eq: Codazzi_cons1}
E_2(k_1)=(k_2-k_1)\omega^1_2(E_1).
\end{equation}
Analogously, for $X=Z=E_2$, $Y=E_1$ and $\eta=E_3$ in \eqref{eq:
Codazzi}, we obtain
\begin{equation}\label{eq: Codazzi_cons2}
E_1(k_2)=(k_2-k_1)\omega^1_2(E_2).
\end{equation}
For $X=Z=E_1$, $Y=E_2$ and $\eta=E_4$ in \eqref{eq: Codazzi}, we
obtain
\begin{eqnarray*}
0&=&\langle B(E_2,E_1),\nabla^\perp_{E_1}E_4\rangle-\langle
B(E_1,E_1),\nabla^\perp_{E_2}E_4\rangle\\
&=&-k_1\langle E_3,\nabla^\perp_{E_2}E_4\rangle,
\end{eqnarray*}
which implies
\begin{equation}\label{eq: Codazzi_cons3}
k_1\omega^4_3(E_2)=0.
\end{equation}
Analogously, for $X=Z=E_2$, $Y=E_1$ and $\eta=E_4$ in \eqref{eq:
Codazzi}, we obtain
\begin{equation}\label{eq: Codazzi_cons4}
k_2\omega^4_3(E_1)=0.
\end{equation}
Since $\nabla^\perp H\neq 0$ on $U$, we can suppose that
$\omega^4_3(E_1)\neq 0$ on $U$. This, together with \eqref{eq:
Codazzi_cons4}, leads to $k_2=0$. From here we get $|k_1|=2|H|\neq
0$, and consequently $k_1$ is a non-zero constant. As $k_1\neq
k_2$, from \eqref{eq: Codazzi_cons1} and \eqref{eq: Codazzi_cons2}
we obtain
\begin{equation}\label{eq: M flat}
\omega^1_2(E_1)=\omega^1_2(E_2)=0,
\end{equation}
thus $M$ is flat.

\noindent Consider now the Gauss equation,
\begin{eqnarray}\label{eq: Gauss_EQ}
\langle R^{\mathbb{S}^4}(X,Y)Z,W\rangle&=&\langle
R(X,Y)Z,W\rangle\nonumber\\&&+\langle B(X,Z),B(Y,W)\rangle-\langle
B(X,W),B(Y,Z)\rangle.
\end{eqnarray}
As $M$ is flat, for $X=W=E_1$ and $Y=Z=E_2$, equations \eqref{eq:
Gauss_EQ} and \eqref{eq: expr B} lead to
\begin{eqnarray}
1&=&\langle B(E_1,E_2),B(E_2,E_1)\rangle-\langle
B(E_1,E_1),B(E_2,E_2)\rangle=-k_1 k_2\nonumber\\
&=&0,
\end{eqnarray}
and we have a contradiction.

Therefore, $\nabla^\perp H=0$ and we conclude.
\end{proof}

\end{document}